\newtheorem{theorem}{Theorem}
\theoremstyle{definition}
\newtheorem{definition}[theorem]{Definition}
\newtheorem{example}[theorem]{Example}
\newtheorem{alg}{Algorithm}
\newcommand{\abs}[1]{\vert#1\vert}
\newcommand\norm[1]{\Vert#1\Vert}
\newcommand{\enorm}{\norm{\edot}}
\newcommand{\set}[1]{\{#1\}}
\newcommand{\kl}[1]{(#1)}
\newcommand{\ekl}[1]{[#1]}
\newcommand{\coloneqq}{:=}
\newcommand{\eps}{\epsilon}
\newcommand{\R}{\mathbb R}
\newcommand{\N}{\mathbb N}
\newcommand{\B}{\mathbf B}
\newcommand{\A}{\mathbf A}
\newcommand{\Ho}{\mathbf H}
\newcommand{\Ps}{\mathbf P}
\newcommand{\Qo}{\mathcal W}
\newcommand{\x}{\mathbf x}
\newcommand{\rr}{\mathbf r}
\newcommand{\source}{p_0}
\newcommand{\z}{\mathbf z}
\newcommand{\ee}{\mathbf e}
\newcommand{\p}{\mathbf p}
\newcommand{\q}{\mathbf q}
\newcommand{\y}{\mathbf y}
\newcommand{\edot}{\, \cdot \,}
\newcommand{\uu}{x}
\newcommand{\vv}{y}
\newcommand{\ww}{z}
\newcommand{\T}{\mathbf T}
\newcommand*\bigcdot{\mathpalette\bigcdot@{.6}}
\newcommand*\bigcdot@[2]{\mathbin{\vcenter{\hbox{\scalebox{#2}{$\m@th#1\bullet$}}}}}
\begin{document}

\title{Compressed sensing and sparsity in photoacoustic tomography}

\author{Markus Haltmeier{}$^1$, Thomas Berer{}$^{2}$, Sunghwan Moon{}$^3$ and  Peter Burgholzer{}$^{2,4}$}

\address{{}$^1$ Department of Mathematics, University of Innsbruck, Technikerstrasse 13, A-6020 Innsbruck, Austria}

\vspace{0.7em}

\address{{}$^2$ Research Center for Non-Destructive Testing (RECENDT), Altenberger Stra{\ss}e 69, 4040 Linz, Austria.}

\vspace{0.7em}

\address{{}$^3$ Department of Mathematical Sciences,
Ulsan National Institute of Science and Technology, Ulsan 44919, Republic of Korea}

\vspace{0.7em}

\address{{}$^4$ Christian Doppler Laboratory for Photoacoustic Imaging and Laser Ultrasonics, Altenberger Stra{\ss}e 69, 4040 Linz, Austria.}

\vspace{1em}

\ead{markus.haltmeier@uibk.ac.at}

\vspace{10pt}

\begin{abstract}
Increasing  the imaging speed  is a  central aim in photoacoustic tomography.
This issue is especially important in the case of sequential scanning approaches as applied for most existing optical detection schemes.
In this work we address this issue using  techniques of compressed sensing.
We demonstrate, that the number of measurements can significantly be  reduced  by allowing general linear measurements instead of point-wise pressure values.
A main requirement in compressed sensing is the sparsity of the unknowns to be recovered. For that purpose we develop the concept of sparsifying temporal transforms for three-dimensional photoacoustic tomography. We establish a two-stage algorithm that recovers the complete pressure signals in a first step and then applies a standard reconstruction algorithm  such as back-projection. This yields a novel reconstruction method with much lower complexity than existing compressed sensing approaches for photoacoustic tomography.
Reconstruction results for simulated and for experimental data verify that the proposed compressed sensing scheme allows to significantly reducing the number of spatial measurements without reducing the spatial resolution.

\end{abstract}

\pacs{43.35.Ud, 87.85.Ng, 43.38.Zp}

\vspace{2pc}
\noindent{\it Keywords}: Photoacoustic tomography, optoacoustic imaging, compressed sensing, sparsity,
non-contact photoacoustic imaging

\section{Introduction}
\label{sec:intro}

Photoacoustic tomography (PAT), also known as optoacoustic tomography,  is a novel non-invasive imaging technology that beneficial combines the high contrast of pure optical imaging with the high spatial resolution of pure ultrasound imaging (see \cite{Bea11,Wan09b,XuWan06}).
The basic principle of PAT is  as follows (compare Figure~\ref{fig:pat}). A semitransparent sample (such as a part of a human patient) is illuminated with short pulses of optical radiation. A fraction  of the optical energy is absorbed inside the sample which causes thermal heating, expansion, and a subsequent acoustic pressure wave  depending on the interior absorbing structure of the sample.
The acoustic pressure is measured outside of the  sample and used  to reconstruct an image of the interior.

\psfrag{L}{\scriptsize illumination}
\psfrag{A}{\scriptsize  sound sources}
\psfrag{P}{\scriptsize  sensor locations}
\begin{figure}[thb!]
\begin{centering}
\includegraphics[width=0.75\columnwidth]{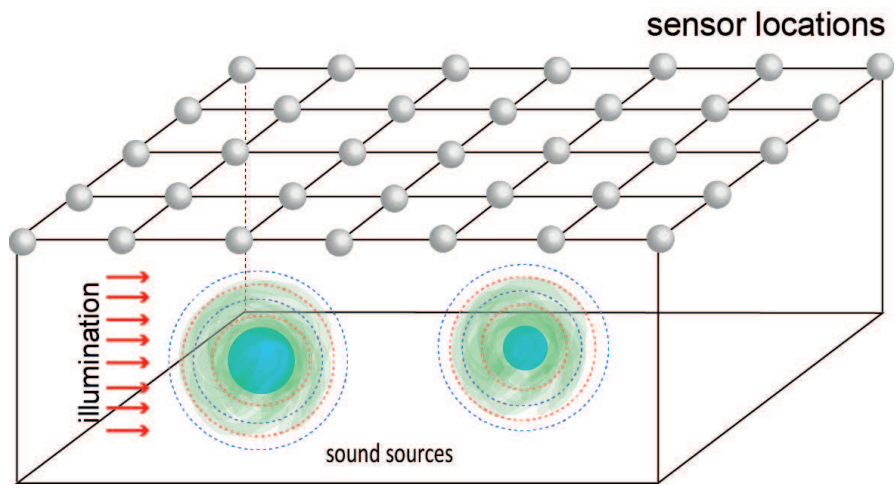}
\caption{\textsc{Basic setup of PAT.} An object  is illuminated with a  short optical pulse that  induces an acoustic pressure wave. The pressure wave is measured on discrete locations on a surface and used to reconstruct an image of the interior absorbing structure. The small spheres indicate the possible detector or sensor locations on a regular grid on the measurement surface. \label{fig:pat} }
\end{centering}
\end{figure}

\subsection{Classical measurement approaches}

The standard approach in PAT is to measure the  acoustic pressure  with small detector elements  distributed on a  surface outside of the sample; see   Figure~\ref{fig:pat}.   The spatial sampling step size  limits the spatial resolution of the  pressure data   and the (lateral) resolution of the final reconstruction.\footnote{Note that  there are several other important factors limiting the resolution of PAT, such as finite detector size, limited  detection bandwidth, a limited acoustic aperture,  or acoustic attenuation.} Consequently, high spatial resolution requires a large number of detector  locations. Ideally, for high frame rate, the pressure data are measured in parallel with a large array made of small  detector elements.  However, the
signal-to-noise ratio and therefore the sensitivity decreases
for smaller detector elements and producing a large  array with high bandwidth is costly and difficult to fabricate.

As an alternative to the usually employed piezoelectric transducers, optical detection schemes have been used to acquire the pressure data \cite{ZhaLauBea08,Berer10,Berer15,Eom15}. In these methods an optical beam is raster scanned along a surface. In case of non-contact photoacoustic imaging schemes the ultrasonic waves impinging on the sample surface change the phase of the reflected light, which is demodulated by interferometric means and a photodetector \cite{Berer10,Berer15,Eom15}. For Fabry-Perot film sensors, acoustically induced changes of the optical thickness of the sensor lead to a change in the reflectivity, which can be measured using a photo diode \cite{ZhaLauBea08}. Equally for both techniques, the ultrasonic data are acquired at the location of the interrogation beam by recording the time-varying output of the photodetector. In order to collect sufficient data the measurement process has to be repeated with changed locations of the interrogation beam. Obviously, such an approach slows down the imaging speed. The imaging speed can be increased by multiplying the number of interrogation beams. For example, for a planar Fabry-Perot sensor a detection scheme using 8 interrogation beams has been  demonstrated in \cite{huynh2016photoacoustic}.

Another, less straight forward, approach to increase the measurement speed is the use of patterned interrogation together with compressed sensing techniques.  Patterned interrogation was experimentally demonstrated using a digital micromirror device (DMD) in \cite{HuyZhaBetEtal14,huynh2016single}. Using digital micromirror devices or spatial light modulators to generate such interrogation patterns together with compressed sensing techniques allows to reduce the number of spatial measurements without significantly increasing the production costs. For such approaches, we develop a compressed sensing scheme based 
sparsifying temporal transforms originally introduced  for PAT with integrating line detectors in~\cite{SanKraBerBurHal15,burgholzer2016}.

\subsection{Compressed sensing}

Compressed sensing (or compressive sampling) is a new sensing paradigm introduced  recently in \cite{CanRomTao06a,CanTao06,Don06}.
It allows to capture high resolution signals using   much less measurements than advised by Shannon's sampling theory.
The basic idea in compressed sensing is replacing point measurements by general linear measurements, where each measurement consists of a linear combination
\begin{equation} \label{eq:cs}
    \y[j]
    = \sum_{i=1}^n \A\ekl{j,i}  \x\ekl{i}
    \quad \textnormal{ for } j =  1, \dots , m  \,.
\end{equation}
Here $\x$  is the desired high resolution signal (or image),
$ \y $ the measurement vector, and  $\A$  the $m \times n$ measurement matrix.
If  $m  \ll n$, then (\ref{eq:cs}) is a severely
under-determinated system of linear equations for the unknown signal. The theory of compressed sensing  predicts that under suitable assumptions the unknown signal can nevertheless be stably recovered from such  data.
The crucial ingredients of compressed sensing are sparsity and randomness.
\begin{enumerate}
\item \textsc{Sparsity:} This refers to the requirement that the unknown signal is sparse, in the sense that it has only a small number of
entries  that are significantly different from zero (possibly after a change of basis).

\item \textsc{Randomness:} This  refers to selecting the entries of the  measurement matrix
in a certain random fashion. This guarantees that the measurement data
are able to sufficiently well separate sparse vectors.
\end{enumerate}

\noindent  In this work we use randomness and sparsity to develop
novel compressed sensing techniques for PAT.

\begin{figure}[thb]
\begin{centering}
\includegraphics[width=0.75\columnwidth]{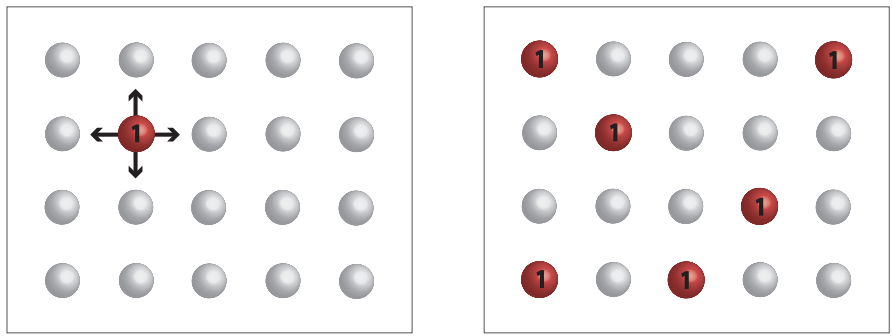}
\caption{\textsc{Standard sampling versus compressed sensing.} Left: Standard sampling records  point-wise data
at individual  detector positions. Right:  Compressed sensing measurements consist of  random combinations of point-wise data  values.\label{fig:sampling} }
\end{centering}
\end{figure}

\subsection{Compressed sensing in PAT}

In  PAT, temporal samples can easily be collected at a high rate compared to spatial sampling, where each sample requires a separate sensor. It is therefore natural to work with semi-discrete data $p(\rr_S[i],  \edot) $, where $\rr_S[i]$ denote  locations on the detection surface. Compressed sensing measurements in PAT  take the form  (\ref{eq:cs})  with $\x\ekl{i}  \coloneqq p(\rr_S[i],  t) $ for fixed time $t$. See Figure~\ref{fig:sampling} for an illustration of classical point-wise sampling versus  compressed sensing measurements. In PAT it is most simple to use binary combinations of pressure values, where $\A\ekl{j,i}$ only  takes two values (states on and off).   Binary measurements can be implemented by optical detection using  patterned interrogation and we restrict ourselves to such a situation.

In the PAT literature, two  types of binary matrices allowing compressed sensing have been  proposed (see  Figure~\ref{fig:matrices}).
In~\cite{HuyZhaBetEtal14,huynh2016single} scrambled Hadamard matrices  have been used   and experimentally realized. In~\cite{SanKraBerBurHal15,burgholzer2016}  expander matrices have been used, where  the measurement matrix is sparse and  has exactly $d$ ones in each column, whose locations are randomly selected. Another possible choice would be a Bernoulli matrix where any entry is selected randomly  from two values with equal probability.  In  all three cases, the random nature of the selected coefficients  yields compressed sensing capability of the measurement matrix (see~\ref{sec:cs} for details).  
As in \cite{SanKraBerBurHal15,burgholzer2016}, in this study we use expander matrices.  For the experimental verification such measurements are implemented virtually by taking full point-measurements in the experiment and then  computing compressed sensing data numerically. This can be seen as proof of principle;  implementing pattern interrogation in our contact-free photoacoustic imaging device is  an important future aspect.

\begin{figure}[thb!]
\begin{centering}
\includegraphics[width=\columnwidth]{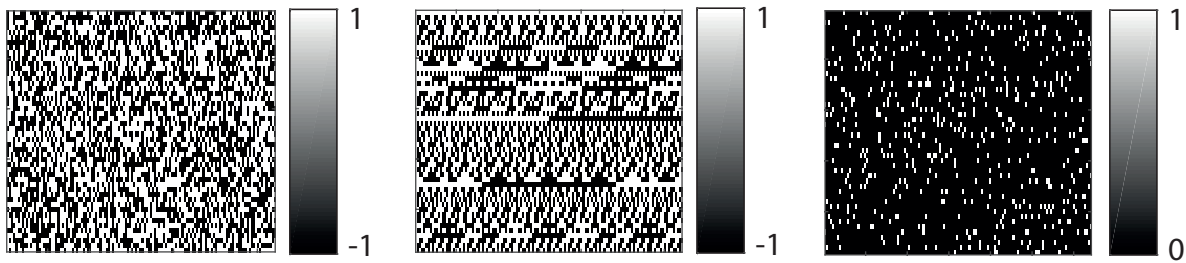}
\caption{\textsc{Binary random matrices allowing compressed sensing.} Left:  Bernoulli matrix is dense and unstructured. Center: Subsampled Hadamard matrix is dense and structured.  Right:   Expander matrix is sparse and unstructured. See~\ref{sec:cs} for more details how to construct these matrices.\label{fig:matrices} }
\end{centering}
\end{figure}

Besides the random nature of the measurement matrix, sparsity of the signal to be recovered is  the 
second  main ingredient  enabling compressed sensing. As in many other applications, sparsity often does not hold in the original domain. Instead sparsity holds in a particular orthonormal basis, such as  a wavelet or curvelet basis \cite{CanDemDonYin06,Mal09}.
However, such a  change  of basis can destroy the compressed sensing capability of the measurement matrix (for example, in the case of expander matrices).
In order to overcome this limitation, in \cite{SanKraBerBurHal15,burgholzer2016} we developed the concept of a  sparsifying  temporal transformation. Such a transform applies in the temporal variable only and results in a
 filtered pressure signal that is sparse.  Because any operation acting in the temporal  domain intertwines with the measurement matrix, one can apply sparse recovery to estimate the sparsified pressure.
The photoacoustic source can be recovered, in a second step,
by applying a standard reconstruction algorithm to the sparsified pressure.

\subsection{Outline of this paper}

In this paper we develop a compressed sensing scheme based on a sparsifying
transform for three-dimensional PAT (see Section~\ref{sec:cspat}).
This complements our work~\cite{SanKraBerBurHal15,burgholzer2016}, where we introduced the concept of sparsifying transforms for PAT with integrating line detectors. Wave propagation is significantly different in two and in three spatial dimensions. As a result, the sparsifying transform proposed in  this work significantly differs from the one presented in~\cite{SanKraBerBurHal15,burgholzer2016}.
In~\ref{sec:cs}  we provide an introduction to compressed sensing serving as  guideline for designing compressed sensing matrices and highlighting  the role of sparsity. In Section~\ref{sec:num} we present numerical results on simulated as well as on experimental data from a non-contact photoacoustic imaging setup~\cite{Hochreiner13}. These results indicate that the number of spatial measurements  can be reduced by at least  a factor of 4 compared to  the classical point sampling approach. The paper concludes with a discussion presented in Section~\ref{sec:discussion} and a short summary in Section~\ref{sec:conclusion}.

\section{Compressed sensing for PAT in planar geometry}
\label{sec:cspat}

In this section we develop a compressed sensing scheme for PAT,
where the acoustic signals are recorded on a planar  measurement  surface.
The planar geometry is of particular interest since it is the naturally occurring geometry if using optical detection schemes like the Fabry-Perot sensor or non-contact imaging schemes.  We thereby extend  the concept of sparsifying temporal transforms introduced  for two-dimensional wave propagation in   \cite{SanKraBerBurHal15,burgholzer2016}.
We emphasize that  the proposed  sparsifying transform for the three-dimensional  wave equation can be used for any detection geometry. An extension of our approach to general geometry would, however,  complicate the notation.

\subsection{PAT in planar geometry}

Suppose the  photoacoustic source distribution $\source(\rr)$   is located in the upper half space  $\set{ (\uu, \vv, \ww) \in \R^3 \mid  \ww > 0}$.
The induced acoustic pressure $p(\rr, t)$ satisfies the wave equation
\begin{eqnarray} \label{eq:wave}
   \frac{1}{c^2}  \frac{\partial^2 p(\rr, t)}{\partial t^2}  - \Delta_\rr p(\rr, t)  = - \frac{\partial \delta}{\partial t}(t) \, \source (\rr) \,,
\end{eqnarray}
where  $\Delta_{\rr}$ denotes the spatial Laplacian, $\partial/\partial t$ is the derivative with respect to time, $c$ the sound velocity, and $\delta (t)$ the Dirac delta-function. Here $(\partial \delta/ \partial t)  \source $ acts as the sound source at time $t=0$ and it is supposed that $p(\rr, t) =0$ for $t<0$. We further denote  by
\begin{equation*}
(\Qo \source) (\uu_S, \vv_S, t) \coloneqq p  (\uu_S, \vv_S, 0, t)\\
 \,,
\end{equation*}
the pressure data restricted to the  measurement plane.
PAT in planar recording geometry is concerned with reconstructing $\source$ from measurements of  $ \Qo\source$.

For recovering $ \source$  from continuous data explicit and stable inversion formulas, either in the  Fourier domain or in the time domain, are  well known.  A particularly useful  inversion method is the universal backprojection (UBP),
\begin{equation}\label{eq:ubp3d}
           \source(\rr)    =
        \frac{\ww}{\pi}
        \int_{\R^2}
         (t^{-1} \partial_t t^{-1} \Qo\source) \left(\uu_S,\vv_S,
         \abs{\rr-\rr_S} \right)
        \rmd S \,.
\end{equation}
Here $\rr = (\uu, \vv, \ww)$ is a reconstruction point,
$\rr_S = (\uu_S, \vv_S, 0)$ a point on the detector surface, and  $\abs{\rr-\rr_S} $
the distance between $\rr$ and $\rr_S$.
The UBP has been derived
in~\cite{XuWan05} for planar, spherical and cylindrical geometries.
The two-dimensional version of the UBP
 \begin{equation*}
          \source(\rr)
         =
        - \frac{2 \ww}{\pi}
        \int_{\R}
        \int_{\abs{\rr-\rr_S}}^\infty
        \frac{ (\partial_t t^{-1} \Qo \source)(\uu_S, t)}{ \sqrt{t^2-\abs{\rr-\rr_S}^2}} \rmd t
        \rmd S \,,
\end{equation*}
where $\rr = (\uu,  \ww)$ and $\rr_S = (\uu_S,  0)$ has been first obtained in  \cite{BurBauGruHalPal07}.
In the recent years, the  UBP has been generalized to elliptical observation surface  in two  and three  spatial dimensions \cite{Hal13a,Nat12}, and various geometries in arbitrary dimension
(see~\cite{Kun07a,Hal14,HalPer15b}).

\subsection{Standard sampling approach}

In practical application, only a  discrete number of  spatial measurements can be made.
The standard sensing approach in PAT  is to distribute  detector locations uniformly on a part of the observation surface.  Such data can be modeled by
\begin{equation} \label{eq:pat}
    \p \ekl{i, \edot}  := (\Qo \source) (\uu_S[i], \vv_S[i],  \edot) \quad \textnormal{ for } i = 1, \dots, n  \,.
\end{equation}
The  UBP algorithm applied to  semi-discrete data~(\ref{eq:pat}) consists in discretizing the spatial integral in  (\ref{eq:ubp3d})  using a  discrete sum over all detector locations and evaluating it for a discrete number  of reconstruction points. This yields to the following UBP reconstruction algorithm.

\begin{framed}
\begin{alg}[UBP algorithm for PAT]\label{alg:ubp}\mbox{}\\
\underline{Goal:}
Recover the source  $\source$ in~(\ref{eq:wave}) from data~(\ref{eq:pat}).

\begin{enumerate}[label=(S\arabic*), leftmargin=3em]
\item\label{alg:ubp1}
Filtration: For any $i$, $t$ compute\\
$\q[i, t] \gets  \partial_t t^{-1} \partial_t t^{-1} \p[i, t]$.

\item\label{alg:ubp2}
Backprojection: For  any $k$ set\\
$\p_0 [k]\gets    v[k]  / \pi
\sum_{i=1}^{N^3}  \q \bigl[ i, \abs{\rr[k]-\rr_S[i]}] w_{i}$.
\end{enumerate}
\end{alg}
\end{framed}

In Algorithm~\ref{alg:ubp}, the  first step \ref{alg:ubp1} can be interpreted as  temporal filtering operation.   The second step \ref{alg:ubp2} discretizes the spatial integral in   (\ref{eq:ubp3d})  and is called discrete backprojection.  The numbers $w_i$ are weights for  the numerical integration and account for density of the detector elements.

\subsection{Compressed sensing approach}

Instead of using  point-wise samples, the proposed compressed sensing approach uses  linear combinations  of  pressure values
\begin{eqnarray} \label{eq:data-cspat}
	\y[j,\edot]    =
	\sum_{i=1}^n  \A[j,i] \, \p\ekl{i,\edot}
\quad \textnormal{for }  j \in \set{ 1, \dots,  m } \,,
\end{eqnarray}
where $\A $ is  a binary  $m \times n$ random matrix,
and  $\p \ekl{i, t}$ are  point-wise pressure data.
In the case of compressed sensing we have $m \ll n$, which means that the number of  measurements is much smaller than the number of point-samples. As shown in~\ref{sec:cs}, Bernoulli matrices, subsampled Hadamard  matrices as well as expander matrices  are possible compressed sensing matrices.

In order to recover the photoacoustic source from compressed sensing data   (\ref{eq:data-cspat}), one can use the following  two-stage procedure.
In the first step we recover the point-wise pressure values
from the compressed sensing measurements. In the second step, one applies a standard reconstruction procedure  (such as the UBP  Algorithm~\ref{alg:ubp}) to the estimated point-wise pressure to obtain  the photoacoustic source.
The first step can  be implemented  by  setting $\hat \p[\edot, t] \coloneqq \bPsi \hat \x [\edot, t]$, where
$\hat \x [\edot, t]$ minimizes the $\ell^1$-Tikhonov functional
\begin{equation} \label{eq:ell1relax}
\frac{1}{2}  \norm{\y[\edot,t] - \A \bPsi \hat \x    }^2
 +  \lambda  \norm{  \hat \x  }_1  \to \min_{\hat \x }\,.
 \end{equation}
Here  $\bPsi \in \R^{n \times n}$ is  a suitable basis  (such as orthonormal wavelets) that sparsely represents the pressure data and $\lambda$ is a regularization parameter. Note that (\ref{eq:ell1relax})   can be solved separately for  every $t \in [0, T]$ which makes the two-stage approach particularly efficient.   The resulting two-stage reconstruction scheme is summarized  in Algorithm \ref{alg:cs-pat}.

\begin{framed}
\begin{alg}[Two-stage compressed sensing reconstruction scheme]\label{alg:cs-pat}\mbox{}\\
\underline{Goal:}
Recover $\source$ from data~(\ref{eq:data-cspat}).

\begin{enumerate}[label=(S\arabic*), leftmargin=3em]

\item\label{it:cs-pat1}
Recovery of point-measurements:
\begin{itemize}
\item Choose  a sparsifying  basis $\bPsi \in \R^{n \times n}$.

\item
For every $t$, find an approximation   $\hat \p[\edot, t] \coloneqq \bPsi \hat \x [\edot, t]$ by minimizing (\ref{eq:ell1relax}).
\end{itemize}

\item\label{it:cs-pat2}
Recover $\source$ by applying  a PAT standard reconstruction algorithm to $\hat \p[\edot, t]$.
\end{enumerate}
\end{alg}
\end{framed}

As an alternative to the proposed two-stage procedure, the photoacoustic source could be recovered directly from  data (\ref{eq:data-cspat})   based on minimizing the  $\ell^1$-Tikhonov regularization  functional  \cite{GraHalSch08,Hal13b}
\begin{equation} \label{eq:sparse}
	\frac{1}{2} \norm{\y - (\A \circ  \Qo  ) \hat \source  }^2_2
	+
	\lambda \norm{  \bPsi   \hat \source  }_1  \to \min_{\hat \source}\,.
\end{equation}
Here $\bPsi $ is a suitable basis that sparsifies the photoacoustic source $\source$. However, such an approach  is numerically expensive since the three-dimensional wave equation  and its adjoint have  to be solved  repeatedly.  The proposed two-step reconstruction scheme is much faster because it avoids evaluating the wave equation, and the iterative reconstruction decouples into lower-dimensional problems for every $t$.  A simple estimation of the number of floating point operations (flops) reveals the dramatic speed improvement. Suppose we have $n =  N \times N$ detector locations, $\mathcal O (N)$ time instance and recover the source  on an $N \times N \times N$ spatial grid. Evaluation of a straight forward time domain discretization of $\Qo$  and its adjoint require $\mathcal O(N^5)$ flops. Hence, the iterative one-step reconstruction requires $N_{\rm iter} \, \mathcal O(N^5)$ operations, where $N_{\rm iter}$ is the number of iterations. On the other hand the  two-stage reconstruction requires  $ N_{\rm iter} \mathcal O (N^3 m)$ flops for the iterative data completion and additionally  $ \mathcal O(N^5)$ flops for the subsequent UBP reconstruction. In the implementation one takes the number of iterations (at least) in the order of $N$ and therefore the  two-step procedure is faster by at least one order of magnitude.

Compressed sensing schemes without  using random measurements  have been considered in~\cite{provost2009application,guo2010compressed,meng2012vivo}. In these approaches an  optimization problem of the form (\ref{eq:sparse}) is solved, where $\A$ is an under-sampled measurement matrix.  Especially when combined with a total variation penalty such approaches  yields  visually appealing result. Strictly taken, the measurements used there are not shown to yield compressed sensing,  which would require some form of incoherence between the measurement matrix and the sparsifying basis (usually established by  randomness).  For which class of phantoms  undersampled   point-wise  measurements have  compressed sensing capability for PAT is currently an unsolved problem.

\subsection{Sparsifying temporal transform}
\label{sec:sparse}

In order that the pressure data can be recovered
by (\ref{eq:ell1relax}) one requires  a
suitable  basis $\bPsi \in \R^{n \times n}$ such that
the pressure  is sparsely represented in this basis and that the composition $\A \circ \bPsi$ is a proper compressed sensing matrix. For expander matrices  these two conditions are not compatible.  To overcome this obstacle  in \cite{SanKraBerBurHal15,burgholzer2016}  we developed the concept of a sparsifying temporal transform for the two-dimensional case
in  circular geometry. Below we extend this concept to three spatial dimensions using combinations of point-wise pressure values.

Suppose we apply a transformation $\T$ to the data $t \mapsto \y[\edot , t]$ that only acts in the temporal variable. Because the  measurement matrix  $\A$ is applied in the spatial  variable,  the transformation $\T$ and the measurement matrix  commute, which yields
\begin{equation} \label{eq:data-spars}
	\T \y     =  \A  ( \T \p   ) \,.
\end{equation}
We call  $\T $  a sparsifying temporal transform, if   $\T \p \ekl{\edot, t} \in \R^n $ is sufficiently sparse for  a suitable class  of source distributions and all times $t$. In this work we propose the following sparsifying spatial transform
\begin{equation}\label{eq:sparstrafo}
           \T  ( \p )
            \coloneqq
            t^3 \partial_t  t^{-1} \partial_t t^{-1} \p\,.
\end{equation}
The sparsifying effect of this transform is illustrated in Figure~\ref{fig:sparse} applied to the pressure data  arising from  a uniform spherical  source.
The reason for  choice of  (\ref{eq:sparstrafo}) is as follows: It is well known that the pressure signals induced by a uniform absorbing sphere has an N-shaped  profile. Therefore, applying the second temporal derivative to $\p$ yields a signal that is sparse.  The modification of the second derivative is used because the term
$\partial_t t^{-1} \p$ appears in the universal backprojection and therefore only one numerical integration is required in the implementation of our approach. Finally, we empirically found that the leading factor $t^3$ results in well balanced peaks in Figure~\ref{fig:sparse} and yields good numerical results.

\begin{figure}[thb!]
\begin{centering}
\includegraphics[width=\columnwidth]{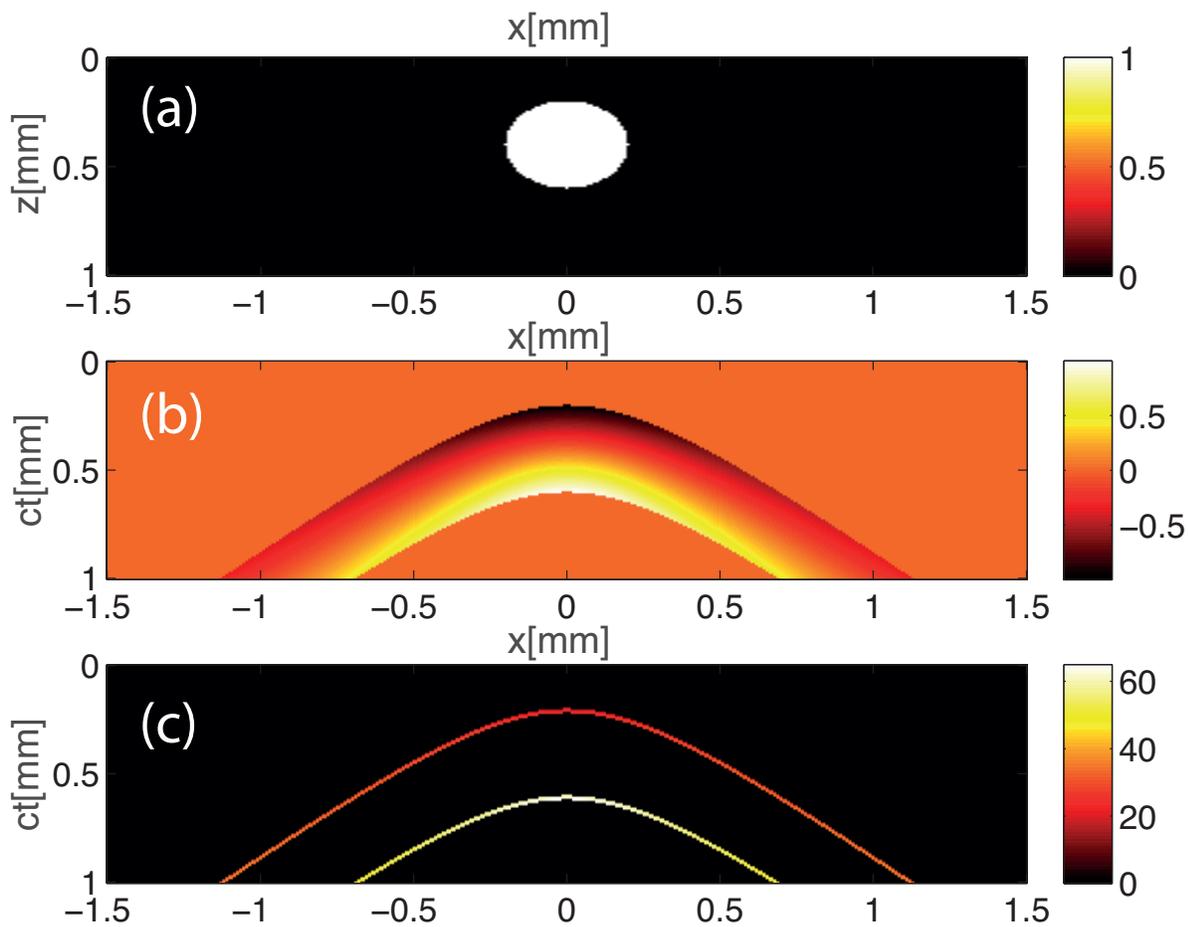}
\caption{\textsc{Effect of the sparsifying transform}. Top: Cross section of a uniform  spherical  source. Middle: Corresponding pressure data. Bottom: Result after applying the sparsifying transform $\T$. \label{fig:sparse} }
\end{centering}
\end{figure}

Having a sparsifying temporal transform at hand, we can construct the photoacoustic
source by the following modified two-stage approach.  In the first step recover
an approximation $\hat \q  [\edot,t] \simeq \T \p [\edot,t]$ by solving
\begin{equation} \label{eq:Tell1}
 \frac{1}{2} \norm{  \T \y[\edot,t] - \A \hat \q [\edot,t]      }^2
 +  \lambda   \norm{   \hat\q [\edot,t]  }_1   \to \min_{\hat \q}\,.
 \end{equation}
In the second step, we  recover the photoacoustic source by  implementing
the UBP expressed in terms  of the sparsified pressure,
 \begin{equation}\label{eq:ubp-mod}
           \source(\rr)    =
        - \frac{\ww}{\pi}
        \int_{\R^2}
        \int_{\abs{\rr-\rr_S}}^\infty
        ( t^{-3} \T    \Qo \source )(\uu_S,\vv_S, t)
        \rmd t
        \rmd S
 \,.
\end{equation}
Here $\rr = (\uu, \vv,\ww)$ is a reconstruction point and $\rr_S = (\uu_S,\vv_S, 0)$ a point on the measurement surface.
The modified UBP formula (\ref{eq:ubp-mod}) can be implemented analogously to Algorithm~\ref{alg:ubp}.  In summary, we obtain the following reconstruction algorithm.

\begin{framed}
\begin{alg}[Compressed sensing reconstruction with sparsifying temporal transform]\label{alg:cs2}\mbox{}\\
\underline{Goal:}
Reconstruct  $\source$ in~(\ref{eq:wave}) from data~(\ref{eq:data-cspat}).

\begin{enumerate}[label=(S\arabic*), leftmargin=2em]
\item\label{it:cs2a}
Recover sparsified point-measurements:
\begin{itemize}[leftmargin=1em]
\item %\label{it:cs-mod-pat1}
Compute the filtered data $\T \y (t)$

\item %\label{it:cs-mod-pat2}
Recover an approximation   $\hat \q[\edot, t] $\\
to  $\T \p[\edot, t]$  by  solving (\ref{eq:Tell1}).
\end{itemize}

\item \label{it:cs2b}
UBP algorithm for sparsified  data:
\begin{itemize}[leftmargin=1em]
\item
For any $i$, $\rho$ set\\
$\q[i, \rho ] \gets
\int_{\rho}^\infty t^{-3} \q [i, t]  \,  \rmd t $

\item
For  any $k$ set\\
$p_0^{\rm CS}[k] \gets \frac{v[k] }{\pi}
\sum_{i=1}^N \q\bigl[ i,  \abs{\rr[k]-\rr_S[i]} \bigr] w_{i}$.
\end{itemize}
\end{enumerate}
\end{alg}
\end{framed}
Since (\ref{eq:Tell1}) can be solved separately for  every $t$, the modified two-stage Algorithm \ref{alg:cs2} is again much faster than a  direct approach based on (\ref{eq:sparse}).
Moreover, from general recovery results in  compressed sensing presented in the Appendix,  \ref{alg:cs2} yields theoretical recovery guarantees for Bernoulli, subsampled Hadamard matrices as well as 
expander matrices (adjacency matrices of left $d$-regular graphs); see   Figure~\ref{fig:matrices}.

\begin{figure}[thb!]
\begin{centering}
\includegraphics[width=\columnwidth]{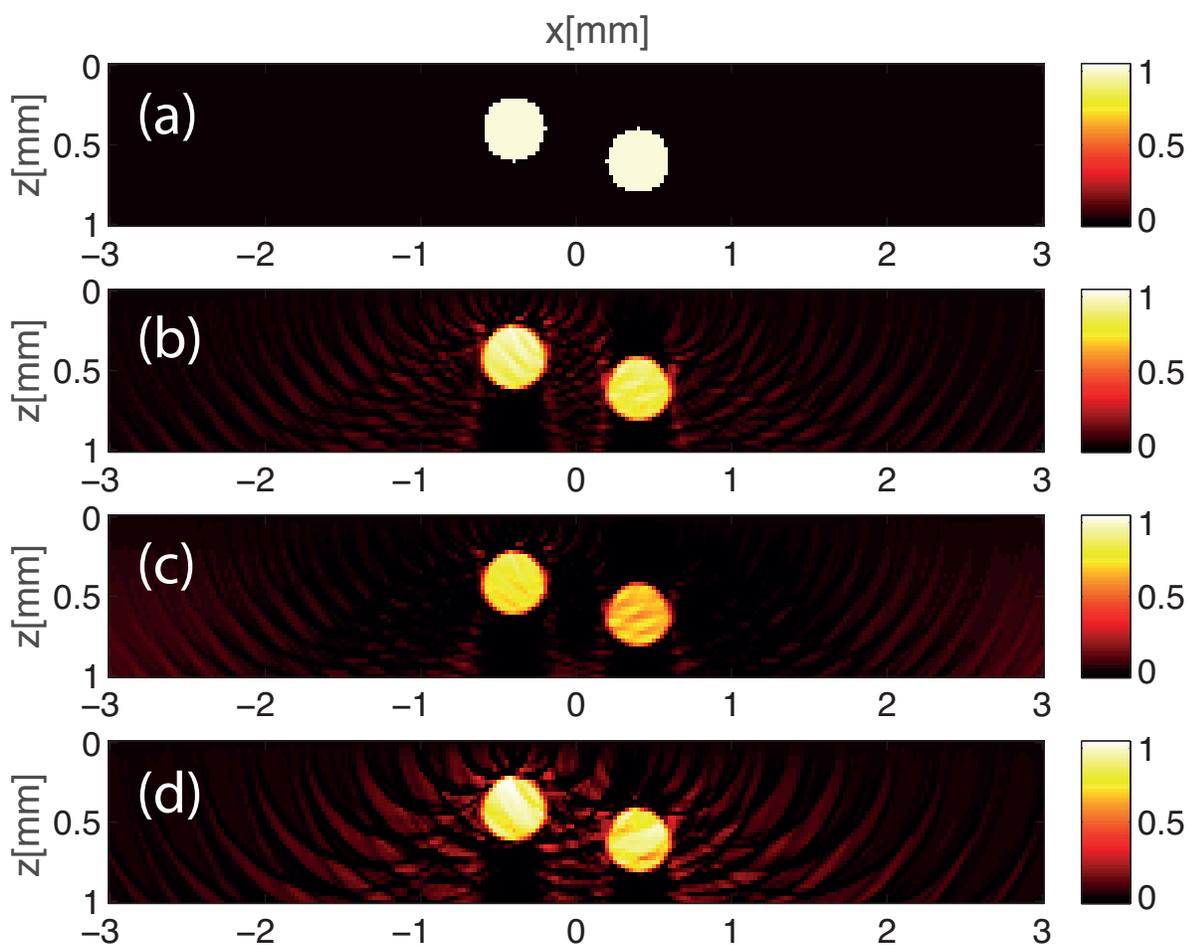}
\caption{\textsc{Three-dimensional compressed sensing  PAT versus  standard approach.}  (a)  Cross section of superposition of two uniform spherical absorbers. (b) Reconstruction using $4096$ point measurements on a Cartesian grid. (c) Compressed sensing reconstruction using $1024$ measurements with $d=15$. (d)  Reconstruction using $1024$ point measurements on a Cartesian grid.\label{fig:simu} }
\end{centering}
\end{figure}

\section{Numerical and experimental results}
\label{sec:num}

\subsection{Results for simulated data}

 We consider  reconstructing a  superposition  of two spherical absorbers,  having centers in the vertical plane  $\set{(\uu,\vv,\ww) \in \R^3 \mid \vv = 0}$. The vertical  cross section of the photoacoustic  source  is shown  in Figure~\ref{fig:simu}(a). In order to test our compressed sensing approach we first create point samples of  the pressure $\Qo \source$ on an equidistant Cartesian grid on the square $[-3, 3] \times [-3,3]$  using $64 \times 64$ grid points.
From that we compute compressed sensing data
\begin{equation} \label{eq:cs1}
	\y[j, t]
	=
	\sum_{i=1}^{4096}  \A\ekl{j,i} \p\ekl{i,t}
\quad \textnormal{ for } j \in \set{ 1, \dots,  1024 }\,.
\end{equation}
The choice $m = 1024$  corresponds to an reduction of measurements by a factor $4$. The expander matrix $\A$ was chosen as  the adjacency matrix of a randomly left $d$-regular graph with $d = 15$; see Example~\ref{ex:expander}
in the Appendix.
The pressure signals $\p\ekl{i,t}$ have been computed by the explicit
formula for the pressure of a uniformly absorbing sphere \cite{diebold1992photoacoustic} and evaluated at  $243$ times points $ct$ uniformly  distributed  in the interval $[0,6]$.

\begin{figure}[thb!]
\begin{centering}
\includegraphics[width=\columnwidth]{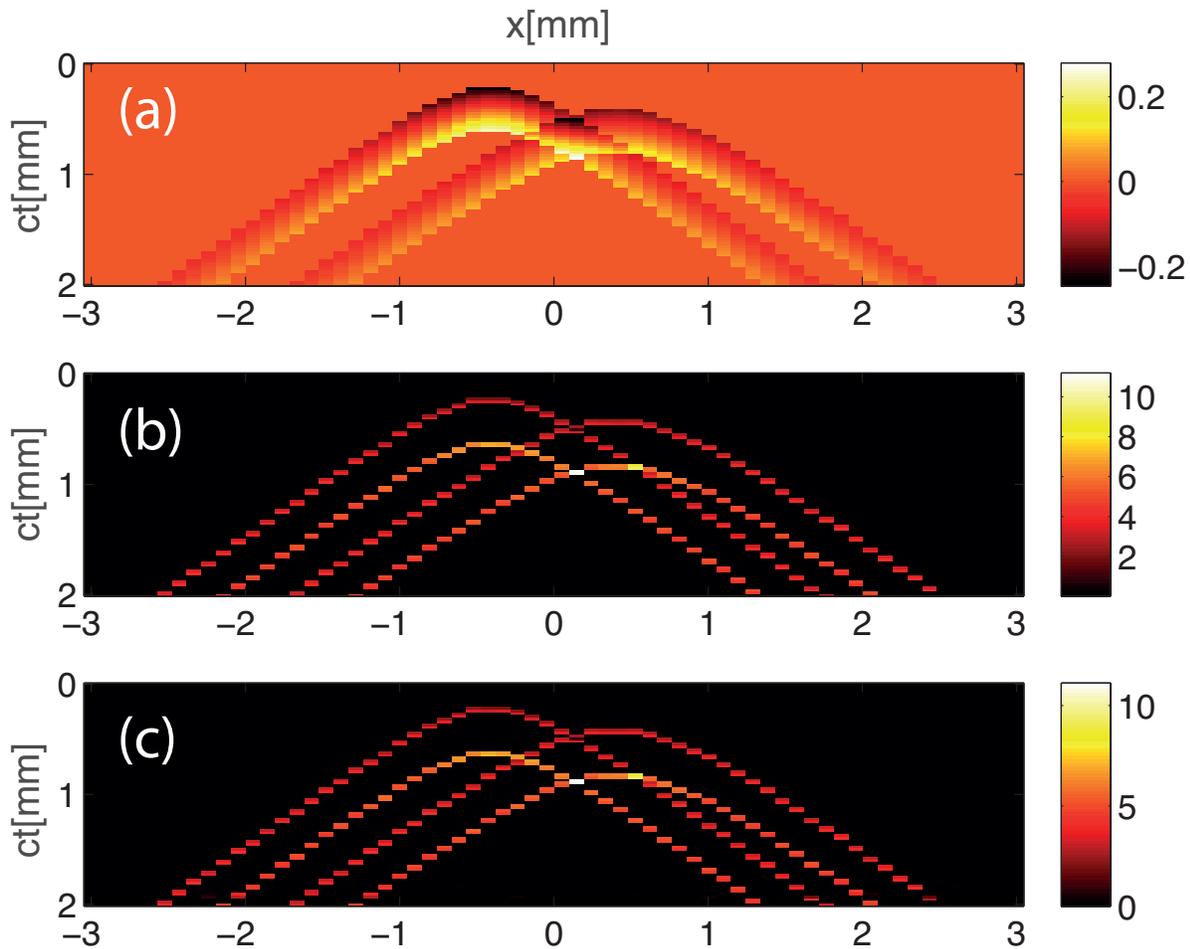}
\caption{\textsc{Result of sparse recovery.}   (a) Pressure at  $\ww = 0$ induced by two spherical absorbers shown in Figure
 \ref{fig:simu}. (b) Result  after applying the sparsifying     transform. (c) Reconstruction of the sparsified pressure from compressed sensing measurements   using $\ell^1$ minimization. \label{fig:sparserec} }
\end{centering}
\end{figure}

\begin{table}
\begin{tabular}{l | l | l | l}
\toprule
{}           & 4096 standard   & 1024 standard   & 1024 CS      \\ \midrule
$\alpha=1$   &   0.0472  &   0.0660 &  0.0409    \\  \midrule
$\alpha=2$   &   0.1046   &  0.1256 & 0.1124   \\  \bottomrule
 \end{tabular}
 \caption{Normalized $\ell^\alpha$-reconstruction errors   for $\alpha = 1, 2$.\label{tab:error} }
  \end{table}

Figure~\ref{fig:simu} shows the reconstruction results
using 4096 point samples  using Algorithm~\ref{alg:ubp} (Figure~\ref{fig:simu}(b)) and the reconstruction from 1024 compressed sensing measurements using Algorithm \ref{alg:cs2} (Figure~\ref{fig:simu}(c)). 
The reconstruction has been
computed at  $241 \times 41$ grid points in a vertical slice of size $[-3, 3] \times [0,1]$.
The $\ell^1$-minimization  problem (\ref{eq:Tell1}) has been solved
using the FISTA~\cite{BecTeb09}. For that purpose the matrix $\A$ has been rescaled to have 2-norm equal to one. The regularization parameter has then been set to $\lambda = 10^{-5}$ and we applied 7500 iterations of the
FISTA with maximal step size equal to one.
We see that the image quality from the compressed sensing reconstruction is comparable to the reconstruction from full data  using only a fourth of the number of measurements. For comparison purpose, Figure~\ref{fig:simu}(d) also shows the reconstruction using 1024 point samples. One clearly recognizes the increase of undersampling artifacts and worse image quality compared to the compressed sensing reconstruction using the same number of measurements. A more precise error evaluation is given in Table~\ref{tab:error}, where we show the normalized 
$\ell^\alpha$-error $\sqrt[\alpha]{ \sum_{k } |p_0[k]  - p_0^{\rm CS}[k]|^\alpha / N}$ for  $\alpha = 1$ and $\alpha=2$. The reconstruction error  in $\ell^1$-norm is
even slightly smaller for the compressed sensing reconstruction  than for the full reconstruction.  
This might be due to a slight denoising effect of $\ell^1$-minimization that removes some small amplitude errors
(contributing more to the more $\ell^1$-norm than to the $\ell^2$-norm).
 Figure~\ref{fig:sparserec} shows the pressure corresponding to the absorbers shown in Figure~\ref{fig:simu} together with the sparsified pressure and its reconstruction from compressed sensing data.

\begin{figure}[thb!]
\begin{centering}
\includegraphics[width=\columnwidth]{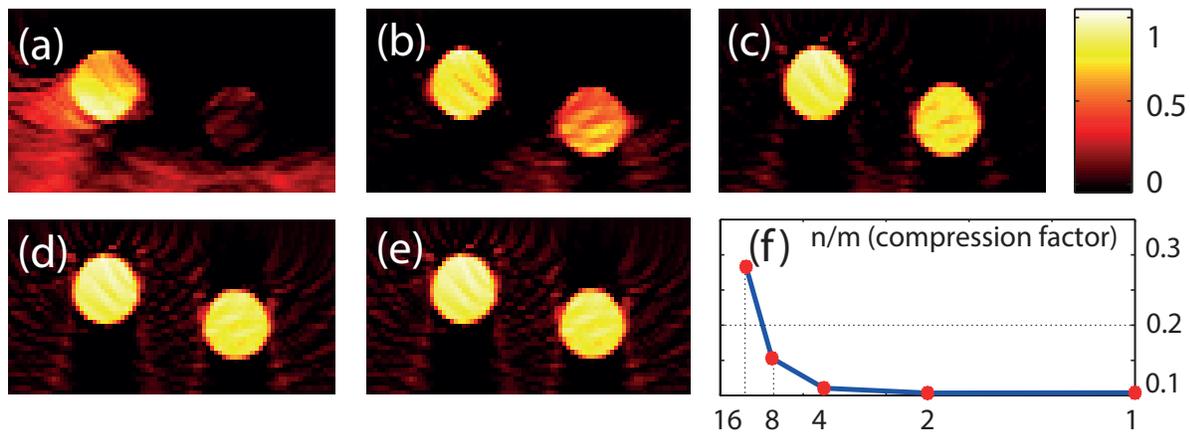}
\caption{\textsc{Recovery results for varying compression factor
$n/m$.}
(a) $n/m = 16$.
(b) $n/m = 8$.
(c) $n/m = 4$.
(d) $n/m = 2$.
(e) $n/m = 1$.
(f) Normalized $\ell^2$-reconstruction in dependence of the compression factor.
 \label{fig:series} }
\end{centering}
\end{figure}

Finally, Figure~\ref{fig:series} shows the reconstruction (restricted to $[-1, 1] \times [0,1]$) using Algorithm~\ref{alg:cs2} for varying compression
factors $n/m = 16, 8, 4, 2,1$. In all cases $d=15$ and $\lambda = 10^{-5}$ have been used and 7500 iterations of the  FISTA  have been applied.  As expected, the reconstruction error  increases with increasing compression factor. One further observes that the compression factor of 4 seems a good choice since  for smaller compression factor the error increases more severely. In our numerical studies (not shown) we observed that also for different  discretizations a compression factor of 4 is a good choice.

\begin{figure}[thb!]
\begin{centering}
\includegraphics[width=0.7\columnwidth]{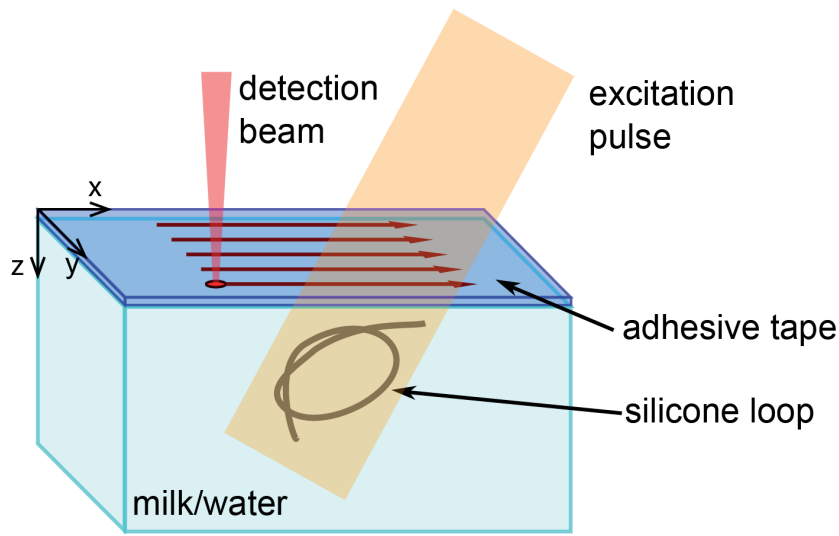}
\caption{\textsc{Schematic of experimental setup of non-contact photoacoustic imaging.}     Photoacoustic waves are excited by short laser pulses. The ultrasonic signals are measured on the surface of the sample using a non-contact photoacoustic imaging technique. \label{fig:setup} }
\end{centering}
\end{figure}

\subsection{Results for experimental data}

Experimental data have been  obtained from a silicone tube phantom as shown in Figure~\ref{fig:setup}.
The silicone tube was filled with black ink  (Pelikan 4001 brillant black, absorption coefficient of $\unit[54]{/cm}$ at $\unit[740]{nm}$), formed to a knot, and immersed in a milk/water emulsion. The outer and inner diameters of the tube were $\unit[600]{\mu m}$  and $\unit[300]{\mu m}$, respectively. Milk was diluted into the water to mimic the optical scattering properties of tissue; an adhesive tape, placed on the top of the water/milk emulsion, was used to mimic skin.   Photoacoustic signals were excited at a wavelength of  $\unit[740]{nm}$ with nanosecond pulses from an optical parametric oscillator pumped by a frequency doubled Nd:YAG laser.  The radiant expose was
$\unit[105]{Jm^{-2}}$, which is below the maximum permissible exposure for skin of $\unit[220]{Jm^{-2}}$. The resulting ultrasonic signals were detected on the adhesive tape by a non-contact photoacoustic imaging setup as described in \cite{Hochreiner13}. In brief, a continuous wave detection beam with a wavelength of $\unit[1550]{nm}$  was focused onto the sample surface.  The diameter of the focal spot was about $\unit[12]{\mu m}$. Displacements on the sample surface, generated by the impinging ultrasonic waves, change the phase of the reflected laser beam. By collecting and demodulating the reflected light, the phase information and, thus, information on the ultrasonic displacements at the position of the laser beam can be obtained. To allow three-dimensional measurements, the detection beam is raster scanned along the surface.  The obtained displacement data do not fulfill the wave equation and cannot be used for image reconstruction directly. Thus, to convert the displacement data to a quantity (roughly) proportional to the pressure, the first derivative in time of the data was calculated \cite{Berer10}.

\begin{figure}[thb!]
\begin{centering}\includegraphics[width=0.8\columnwidth]{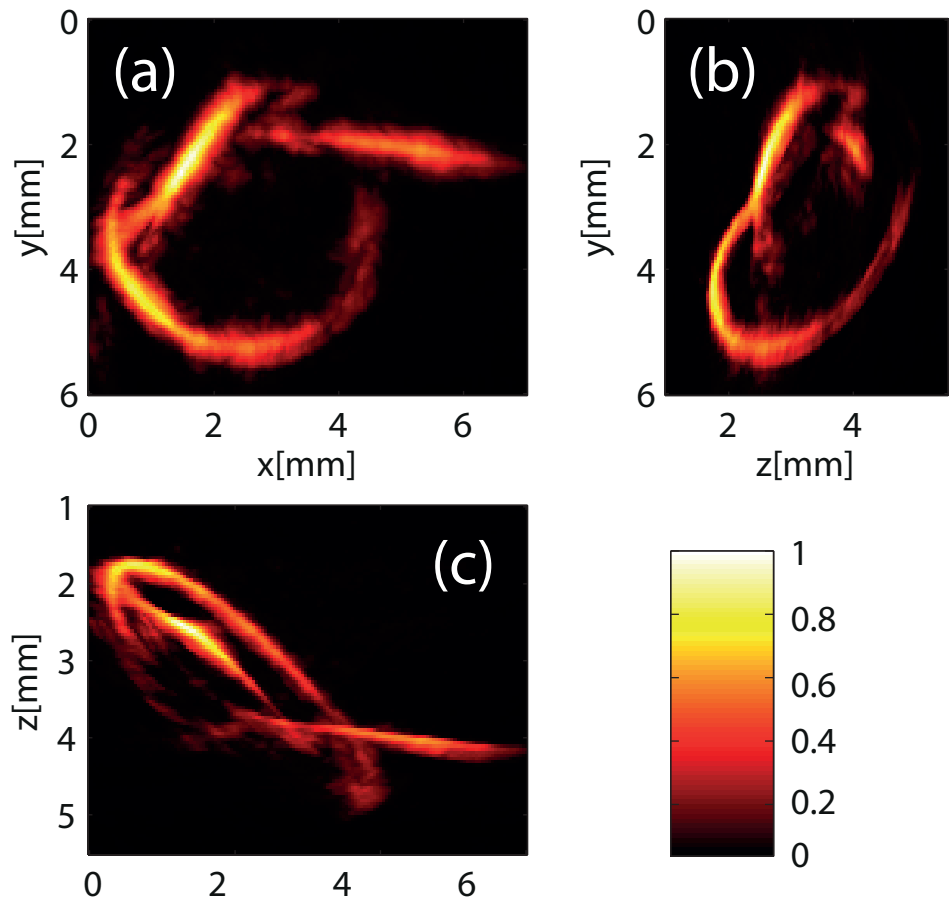}
\caption{\textsc{Reconstruction results using compressed sensing measurements.}  Maximum intensity projections of a silicone loop along the $z$-direction (a), the $x$-direction (b), and the $y$-direction (c). \label{fig:recrealCS}}
\end{centering}
\end{figure}

\begin{figure}[thb!]
\begin{centering}
\includegraphics[width=0.8\columnwidth]{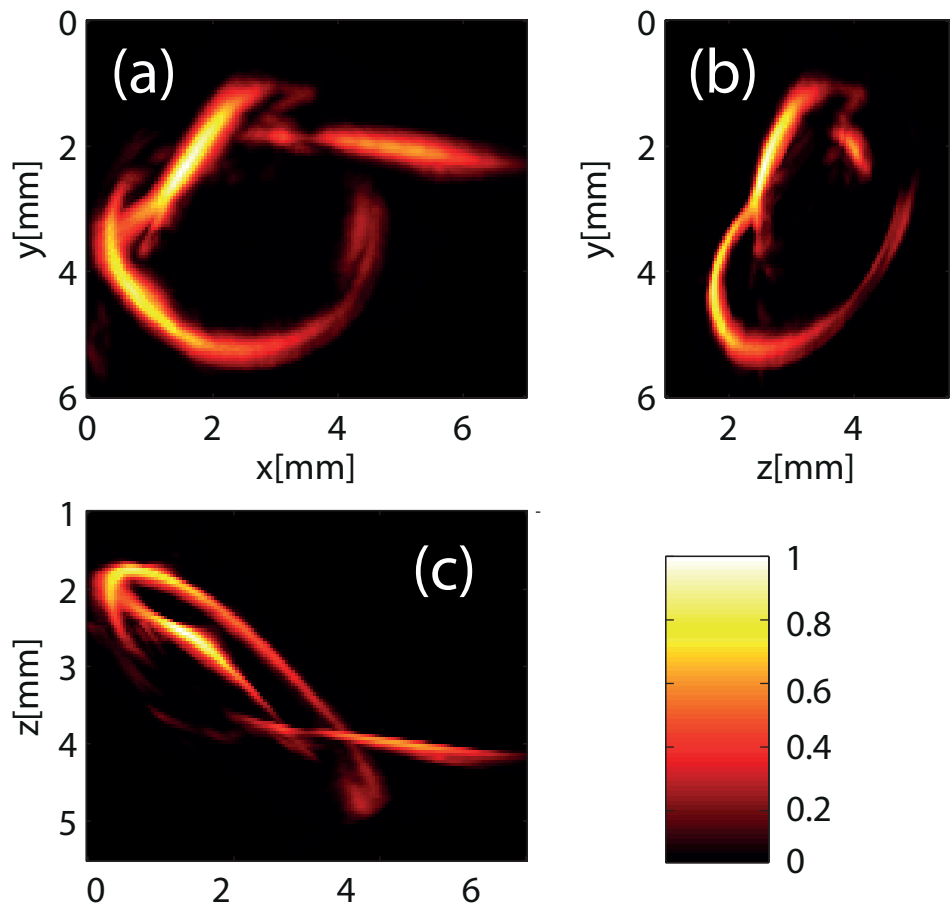}
\caption{\textsc{Reconstruction results using full  measurements.} Maximum intensity projections of a  silicone loop along the $z$-direction (a), the $x$-direction (b), and the $y$-direction (c).\label{fig:recreal}}
\end{centering}
\end{figure}

Using this setup, point-wise pressure data on the measurement  surface
have been collected for $4331 = 71 \times 61$ detector positions on   over an area of
$\unit[7]{mm} \times \unit[6]{mm}$. From this data we generated $m =  1116$ compressed sensing  measurements, where each detector location has been used  $d = 10$ times in total.
Figure~\ref{fig:recrealCS} shows the maximum amplitude projections  along the $z$, $x$, and $y$-direction, respectively, of the three-dimensional reconstruction  from compressed sensing data using Algorithm~\ref{alg:cs2}. The sparsified pressure  has been reconstructed  by minimizing (\ref{eq:Tell1}) with the FISTA  using 500 iterations and a regularization parameter of $10^{-5}$. Further, the  three-dimensional reconstruction  has been evaluated at  $110  \times 122 \times   142
$ equidistant grid points.
For  comparison purpose, in Figure~\ref{fig:recreal} we show the  maximum amplitude projections   from the UBP Algorithm~\ref{alg:ubp} applied to  the original data set. We observe  that there is only a small difference between the reconstructions in terms of  quality measures such as contrast, resolution and signal to noise ratio. Only, the structures in the compressed sensing reconstruction appear to be slightly less regular. A detailed quality evaluation  is beyond the scope of this paper, which aims at serving as proof of principle of our two-stage compressed sensing approach with sparsifying transform.
However the compressed sensing approach uses only a fourth of the number of measurements of the original data set.
This clearly demonstrates the potential of our compressed sensing scheme for decreasing the number of measurements while keeping the image quality.

\begin{figure}[thb!]
\begin{centering}\includegraphics[width=0.8\columnwidth]{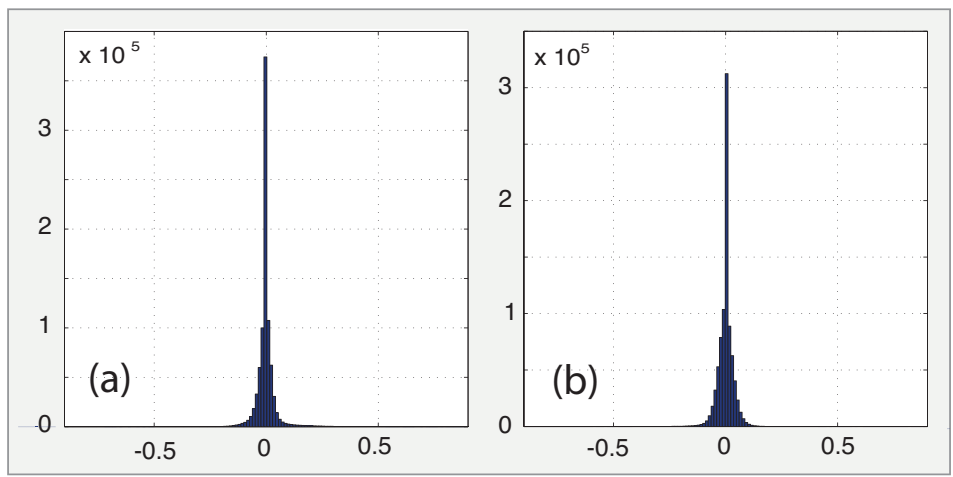}
\caption{\textsc{Histograms of experimental data.}
(a) Histogram for measured  pressure values (normalized to the interval $[0,1]$).
(b) The same after applying the sparsifying transform $\T$. \label{fig:hist}}
\end{centering}
\end{figure}

Figure~\ref{fig:hist} shows histograms of the pressure values before and
after applying the sparsifying temporal transform. In both cases the histograms are concentrated around the value zero. This implies  the  approximate sparsity and therefore justifies our approach,
even if the phantom is not a superposition of uniformly absorbing  spheres.
It further shows that for the  present situation one could even apply our two-stage
procedure without applying the  sparsifying transform.

\section{Discussion}
\label{sec:discussion}

To ensure sparsity in PAT, the  standard approach is choosing a suitable  basis
which sparsely represents  the pressure data on the measurement surface. In this paper we developed a different concept based
on sparsifying temporal transforms.  Since any temporal transform intertwines
with the spatial measurements our approach can be used in combination with any
measurement  matrix that is incoherent to the pixel basis.   This includes binary random matrices
such as Bernoulli, Hadamard, or expander matrices (see~\ref{sec:cs} for details).
According to the compressed sensing theory, expander matrices  can  be used  with
binary entries 0 and 1.
Bernoulli and Hadamard matrices, on the other hand,  should be used with zero mean
(achieved, for example taking $\pm 1$ as binary entries).
As  0/1 entries  can be practically  most simple be realized, for Bernoulli and Hadamard matrices
the mean value has to be subtracted after the measurements process~\cite{HuyZhaBetEtal14}.
Avoiding such additional data manipulations  is one reason why we currently work with  expander matrices.
Another reason is the sparse structure of expander matrices which can be used to accelerate image reconstruction.
In future work we will also investigate  the use of Bernoulli and Hadamard  in combination with sparsifying
temporal or spatial transforms, and compare the performance of these measurement ensembles
in different situations. 

As mentioned in the introduction, patterned interrogation can be used   to practically implement compressed sensing in PAT. It has been realized by using a digital micromirror device \cite{HuyZhaBetEtal14,huynh2016single}, where a Fabry-Perot sensor was illuminated by a wide-field collimated beam. The reflected beam, carrying the ultrasonic information on the acoustic field, was then sampled by the DMD and the spatially integrated response was measured by a photodiode.
Another possibility is the application of spatial light modulators (SLMs), which are able to modulate the phase of the light. By using such SLMs arbitrary interrogation patterns can be generated directly on a sample surface \cite{grunsteidl2013spatial}.  SLMs are commercially available for a wavelength of \unit[1550]{nm}, which is the most common wavelength used in optical detection schemes. However, also for other wavelengths appropriate devices are available. State-of-the-art SLMs provide typical resolutions between $1920 \times 1080$ pixels and $4094 \times 2464$ pixels, which is sufficient for the compressed imaging scheme presented in this work. For a resolution of $1920 \times 1080$, the typically achieved frame rate is \unit[60]{Hz}. This is faster than the pulse repetition rate of commonly used excitation laser sources for PAT, thus enabling single shot measurements. If a faster repetition rate is required, one could use SLMs with a higher frame rate. These, however, usually exhibit lower resolution.

For the Fabry-Perot etalon sensors, the wavelength of the interrogation beam has to be tuned, such that it corresponds to the maximum slope of the transfer function of the sensor. Since for the patterned interrogation scheme only one wavelength is used for the acquisition of the integrated response this demands high quality Fabry-Perot sensors with highly uniform sensor properties. For non-contact schemes, using Mach-Zehnder or Michelson based demodulation, the sensitivity does not depend on the wavelength as the sensor is stabilized by the relative phase between the interrogation beam and a reference beam.
 However, if the surface is not adequately flat, the phase of the reflected light is spatially varying. Thereby, the sensitivity changes over the detection surface, and even locations with zero or low response could exist. Here, the phase modulation capability of SLMs offers the possibility to compensate for this. In general, each pixel of a SLM can shift the phase of light at least up to $2\pi$ and the resulting phase distribution is impressed on the reflected beam. Separate lens functions can be applied to each detection point individually by using distinct kernels for each of these points \cite{curtis2002dynamic}. In case the shape of the sample surface is known, the phase at each detection point can be chosen to compensate for the phase shifts caused by the imperfect sample surface. With this method it is even possible to choose different focal distances for each detection point, so that detection on even rougher surfaces could be facilitated.

\section{Conclusion}
\label{sec:conclusion}

To speed up the data collection process in  sequential scanning PAT while  keeping sensitivity high without significantly increase the production costs, one has to reduce the number of spatial measurements. In this paper we proposed a compressed  sensing scheme for that purpose  using random measurements  in combination with a sparsifying temporal transform.  We presented a selected review of compressed sensing that demonstrates  the  role of sparsity and randomness for high resolution recovery. Using general results from compressed sensing we were able to derive  theoretical recovery guarantees for our approach based on  sparsifying temporal transforms. Further, this comes with a fast algorithmic implementation.

\section*{Acknowledgments}

This work has been supported by the Austrian Science Fund (FWF), project number P25584-N20, the Christian Doppler Research Association (Christian Doppler Laboratory for Photoacoustic Imaging and Laser Ultrasonics), the European Regional Development Fund (EFRE) in the framework of the EU-program Regio 13, the federal state Upper Austria.
S. Moon thanks University of Innsbruck for its hospitality during his visit. The work of S. Moon has been supported by the National Research Foundation of Korea grant funded by the Korea government (MSIP) (2015R1C1A1A01051674) and the TJ Park
Science Fellowship of POSCO TJ Park Foundation.

\appendix

\section{Ingredients from compressed sensing}
\label{sec:cs}

In  this section we present the basic ingredients of compressed sensing that   explains the choice of the measurement matrices and the role of sparsity in PAT. The  aim of compressed sensing is to stably recover a signal or image modeled by vector
$\x \in \R^n$ from measurements
\begin{equation} \label{eq:ip}
	\y  = \A \x + \ee \,.
\end{equation}
Here $\A \in \R^{m \times n}$ with $m \ll n$ is the   measurement matrix,  $\ee$ is an unknown error (noise) and $\y$  models the given noisy data.   The basic components that make compressed sensing possible are sparsity (or compressibility)  of the signal  $\x$ and some form of randomness in the measurement matrix $\A$.

\subsection{Sparsity and compressibility}

The first basic ingredient of compressed sensing is   sparsity, that is defined as follows.

\begin{definition}[Sparse signals] \label{def:sparsity}\mbox{}\\
Let $s \in \N$ and $\x \in \R^n$. The  vector $\x$ is called $s$-sparse,  if  $\norm{\x}_0 \coloneqq \sharp (\set{i  \in \set{1, \dots , n} \mid \x\ekl{i} \neq 0})  \leq s$. One informally calls  $\x$ sparse, if  it is  $s$-sparse for sufficiently small $s$.
\end{definition}

In Definition~\ref{def:sparsity}, $\sharp (S)$ stands for the number of elements in a set $S$. Therefore $\norm{\x}_0$ counts the
 number of non-zero entries in the vector $\x$.
In the mathematical sense  $\enorm_0$ is neither a  norm or a quasi-norm\footnote{A quasi-norm satisfies all axioms of a norm, except that the triangle inequality is replaced by the weaker inequality $\norm{\x_1+ \x_2} \leq K \kl{\norm{\x_1}+ \norm{\x_2}}$ for some constant $K \geq 1$.} but it is common to call $\enorm_0$ the $\ell^0$-norm.  It satisfies
$\norm{x}_0 = \lim_{p \downarrow 0} \norm{\x}_p^p$, where
\begin{equation} \label{eq:ellp}
	\norm{\x}_p
	\coloneqq
	\sqrt[p]{\sum_{i=1}^n \abs{ \x[i]}^p}
	\quad  \textnormal{ with $p >0$ }   \,,
\end{equation}
stands for the  $\ell^p$-norm.
Recall that $\enorm_p$ is indeed a norm for $p \geq 1$ and  a quasi-norm  for $p \in (0,1)$.

Signals of practical  interest are often not  sparse in the strict sense, but can be well approximated  by sparse vectors. For  that purpose we next define the $s$-term approximation error that can be used as a  measure for compressibility.

\begin{definition}[Best $s$-term approximation error]\label{def:compressibility}\mbox{}\\
Let $s \in \N$ and $\x \in \R^n$. One calls
\begin{equation*}
 	\sigma_{s} (\x)
	\coloneqq \inf \set{ \norm{\x - \x_s}_1
	\mid  \x_s \in \R^n \textnormal{ is $s$-sparse} }
\end{equation*}
the best  $s$-term approximation error of $\x$ (with respect to the $\ell^1$-norm).
\end{definition}

The best  $s$-term approximation error $\sigma_{s} (\x)$ measures,  in terms of the $\ell^1$-norm,  how much the vector $\x$ fails  to be $s$-sparse.
One  calls $\x \in \R^n$ compressible, if  $\sigma_s(\x)$ decays
sufficiently fast with increasing $s$.
The estimate (see \cite{FouRau13})
\begin{equation} \label{eq:approx}
	\sigma_{s} (\x)
	\leq \frac{q  (1-q)^{1/q-1} }{s^{1/q-1}}
	\norm{x}_q  \quad \textnormal{ for  }
	q  \in (0,1)
\end{equation}
shows that a signal is compressible if its $\ell^q$-norm is sufficiently small for some $q < 1$.

\subsection{The  RIP in compressed sensing}

Stable and robust recovery  of sparse vectors  requires the measurement matrix to well separate sparse vectors.
The RIP  guarantees   such a separation.

\begin{definition}[Restricted isometry property (RIP)]\mbox{}\\
Let $s \in \N$ and $\delta \in (0,1)$. The measurement matrix  $\A \in \R^{m\times n}$ is said to satisfy the RIP of order $s$ with constant $\delta$,  if, for all $s$-sparse $\x \in \R^n$,
\begin{equation}\label{eq:rip}
(1-\delta)\norm{\x}_2^2 \leq \norm{\A\x}_2^2 \leq (1+\delta) \norm{\x}_2^2 \,.
\end{equation}
We write $\delta_s$ for the smallest constant  satisfying (\ref{eq:rip}).
\end{definition}

In the  recent years, many sparse recovery results have been derived  under various forms of the RIP.
Below we give a result  derived recently in \cite{CaiZha13}.

\begin{theorem}[Sparse recovery under the RIP] \label{thm:rip} \mbox{}\\
Let $\x \in \R^n$ and let $\y \in \R^m$ satisfy  $\norm{ \y - \A\x }_2 \leq \eps$ for some  noise  level $\eps >0$. Suppose that $\A \in \R^{m\times n}$ satisfies the RIP of order $2s$ with constant $\delta_{2s} < 1/2$, and let $\x_\star$ solve
\begin{eqnarray} \nonumber
&&\textnormal{minimize}_{\z} \norm{\z}_1 \\ \label{eq:ell12}
&&\textnormal{such that } \norm{\A \z - \y}_2
\leq \epsilon \,.
\end{eqnarray}
Then, for constants $c_1, c_2$ only depending on $\delta_{2s}$,
$\| \x - \x_\star \|_2
\leq   c_1  \sigma_{s}(\x) /  \sqrt{s}
+  c_2 \epsilon$.
\end{theorem}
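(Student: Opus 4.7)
Set $h \coloneqq \x_\star - \x \in \R^n$ for the reconstruction error. Because both $\x$ and $\x_\star$ are feasible in (\ref{eq:ell12}), the triangle inequality gives the tube estimate $\norm{\A h}_2 \leq 2\eps$. Let $S \subset \set{1,\dots,n}$ index the $s$ largest-magnitude entries of $\x$, so that $\sigma_s(\x) = \norm{\x - \x_S}_1$. The $\ell^1$-optimality $\norm{\x + h}_1 \leq \norm{\x}_1$, combined with the triangle inequality applied separately on $S$ and $S^c$, yields the standard cone condition
\begin{equation*}
\norm{h_{S^c}}_1 \leq \norm{h_S}_1 + 2\sigma_s(\x).
\end{equation*}

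The next step is the classical blocking argument. Order the entries of $h_{S^c}$ by decreasing magnitude and partition $S^c$ into consecutive index blocks $S_1, S_2, \dots$ of size $s$. A monotonicity comparison between the $\ell^2$-norm on a block and the average magnitude of the previous block gives $\sum_{k\geq 2} \norm{h_{S_k}}_2 \leq s^{-1/2}\, \norm{h_{S^c}}_1$, which combined with the cone condition and $\norm{h_S}_1 \leq \sqrt{s}\, \norm{h_S}_2$ produces the tail bound
\begin{equation*}
\sum_{k\geq 2}\norm{h_{S_k}}_2 \leq \norm{h_{S\cup S_1}}_2 + 2\sigma_s(\x)/\sqrt{s}.
\end{equation*}
Since $h_{S\cup S_1}$ is $2s$-sparse, the RIP provides $(1-\delta_{2s})\norm{h_{S\cup S_1}}_2^2 \leq \norm{\A h_{S\cup S_1}}_2^2$. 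Expanding the right-hand side as $\langle \A h_{S\cup S_1},\, \A h - \sum_{k\geq 2}\A h_{S_k}\rangle$, estimating the first term via the tube inequality and each cross term via the standard polarization consequence $\abs{\langle \A u, \A v\rangle} \leq \delta_{2s}\, \norm{u}_2\, \norm{v}_2$ for disjointly supported $s$-sparse vectors, and then inserting the tail bound, one arrives at a scalar inequality that can be solved for $\norm{h_{S\cup S_1}}_2$. Combining with $\norm{h}_2 \leq \norm{h_{S\cup S_1}}_2 + \sum_{k\geq 2}\norm{h_{S_k}}_2$ produces the claimed two-term estimate.

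The main obstacle is calibrating the RIP-based cross-term bound sharply enough to reach the threshold $\delta_{2s} < 1/2$. A naive execution of the above plan requires splitting the $2s$-sparse vector $h_{S\cup S_1}$ into two $s$-sparse halves in order to apply a $\delta_{2s}$ (rather than $\delta_{3s}$) inner-product bound; this introduces a factor of $\sqrt{2}$ and only yields the weaker Cand\`es threshold $\delta_{2s} < \sqrt{2}-1$. To reach the claimed (and, in fact, sharp Cai--Zhang) threshold one replaces the block-wise treatment of the tail by the sparse representation of convex polytopes technique: every element of the scaled $\ell^1$-ball on $S^c$ is written as a convex combination of $s$-sparse vectors with controlled $\ell^2$-norm, so that a single RIP estimate on $2s$-sparse directions controls all tail interactions simultaneously and removes the spurious $\sqrt{2}$. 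Once this refinement is in place, the constants $c_1, c_2$ emerge as explicit rational functions of $\delta_{2s}$ that remain bounded throughout the regime $\delta_{2s} < 1/2$.
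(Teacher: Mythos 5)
Your overall strategy is the right one, and it in fact mirrors what the paper does: the paper gives no self-contained argument for Theorem~\ref{thm:rip} but simply defers to Cai and Zhang~\cite{CaiZha13}, and your sketch is essentially a roadmap of that reference. You also correctly diagnose the central difficulty: the cone condition, the blocking of $S^c$ into $s$-blocks, the tube bound $\norm{\A h}_2\leq 2\eps$, and the polarization estimate $\abs{\langle \A u,\A v\rangle}\leq \delta\norm{u}_2\norm{v}_2$ for disjointly supported sparse vectors together only reach the Cand\`es threshold $\delta_{2s}<\sqrt{2}-1$ (and, as you note, a $\delta_{2s}$-based cross-term bound for the pair $(h_{S\cup S_1},h_{S_k})$ is not even legitimate without first splitting the $2s$-sparse part, since that pair is supported on up to $3s$ indices).

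The gap is that the step which actually carries the theorem at the stated threshold $\delta_{2s}<1/2$ --- the ``sparse representation of a polytope'' lemma of Cai and Zhang, which writes any vector $v$ with $\norm{v}_\infty\leq\alpha$ and $\norm{v}_1\leq s\alpha$ as a convex combination $v=\sum_i\mu_i u_i$ of $s$-sparse vectors $u_i$ with controlled norms, and then averages the RIP inequality over the resulting $2s$-sparse directions $h_{S\cup S_1}+t\,u_i$ --- is named but neither stated precisely nor applied. Everything you actually execute proves a strictly weaker theorem; the refinement that ``removes the spurious $\sqrt{2}$'' is precisely the content of~\cite{CaiZha13} and cannot be left as a one-sentence gesture if the proof is to be self-contained. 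To close the gap you would need to state the decomposition lemma, verify that the tail of $h$ on $S^c$ (normalized via the cone condition) satisfies its hypotheses, and carry out the averaging of $\norm{\A(h_{S\cup S_1}+t\,u_i)}_2^2$ over $i$ that yields a quadratic inequality in $\norm{h_{S\cup S_1}}_2$ whose coefficients remain of the right sign up to $\delta_{2s}=1/2$. Short of that, the honest conclusion of your writeup is the classical $\delta_{2s}<\sqrt{2}-1$ result plus a correct pointer to where the sharp constant comes from --- which is, to be fair, no less than what the paper itself provides.
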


\begin{proof}
See~\cite{CaiZha13}.
\end{proof}

Theorem \ref{thm:rip} states stably and robust  recovery  for measurement matrices satisfying the RIP.
The error estimate consists of two terms: $c_2 \epsilon$ is due to the data noise and is proportional to the noise  level (stability with respect to noise). The term $c_1  \sigma_{s}(\x) /  \sqrt{s}$ accounts for the  fact that the unknown may not be strictly
$s$-sparse  and shows robustness with respect to the model assumption of sparsity.

No deterministic construction is known providing large measurement matrices satisfying the RIP. However,  several types of random matrices  are known to satisfy the  RIP with high probability. Therefore,   for such measurement matrices, Theorem~\ref{thm:rip} yields stable and robust recovery  using (\ref{eq:ell12}). We give two important examples of  binary random matrices satisfying the RIP~\cite{FouRau13}.

\begin{example}[Bernoulli matrices]\label{ex:bernoulli}\mbox{}\\
A binary random  matrix $\B_{m,n} \in \set{-1,1}^{m\times n}$  is called  Bernoulli matrix  if its entries are independent and take the  values $-1$ and $1$ with equal probability. A Bernoulli matrix satisfies $\delta_{2s} < \delta $ with probability tending to $1$ as $m \to \infty$, if
\begin{equation}\label{eq:m-bernoulli}
	m \geq  C_\delta  s \kl{ \log ( n/s) + 1 }
\end{equation}
for  some  constant $C_\delta > 0$.
Consequently,  Bernoulli-measurements
yield stable and robust recovery by (\ref{eq:ell12}) provided that (\ref{eq:m-bernoulli}) is satisfied.
 \end{example}

Bernoulli matrices are dense and unstructured. If $n$  is large then storing and applying  such a matrix is expensive. The next example gives a structured binary matrix satisfying the RIP.

\begin{example} [Subsampled  Hadamard matrices]\label{ex:hadamard}\mbox{}\\
Let $n$ be   a power of two. The  Hadamard matrix $\Ho_n$ is a binary orthogonal  and self-adjoint $n \times n$ matrix that takes values in $\set{-1,1}$. It  can be defined inductively by $\Ho_1 = 1$ and
\begin{equation} \label{eq:hadamard}
 \Ho_{2n} \coloneqq
\frac{1}{\sqrt{2}}
\left[
\begin{array}{ll}
\Ho_{n} &  \phantom{-} \Ho_{n}\\
\Ho_{n}  & -\Ho_{n}
\end{array}
\right]
\,.\end{equation}
Equation (\ref{eq:hadamard}) also serves as the basis for evaluating $\Ho_n \x $ with $n \log n$ floating point operations.
A randomly subsampled  Hadamard matrix has  the form $\Ps_{m,n}  \Ho_n\in \set{-1,1}^{m\times n }$, where $\Ps_{m,n}$  is a subsampling operator  that selects $m$  rows uniformly at random. It  satisfies $\delta_{2s} <  \delta  $ with probability tending to $1$ as $n \to \infty$, if
\begin{equation}\label{eq:m-hadamard}
	m \geq  D_\delta  s   \log ( n)^4
\end{equation}
for  some constant $D_\delta > 0$.
Consequently,  randomly subsampled  Hadamard matrices again yield stable and robust recovery using (\ref{eq:ell12}).
\end{example}

\subsection{Compressed sensing using lossless expanders}

A particularly useful  type of binary measurement  matrices  for compressed sensing are sparse matrices
having exactly $d$ ones in each column.  Such a measurement matrix can be interpreted as  the adjacency  matrix of a left $d$-regular bipartite graph.

Consider the bipartite graph $(L, R, E)$ where  $L \coloneqq \set{1, \dots, n}$ is the set of left vertices, $R \coloneqq  \set{1, \dots, m}$ the set of right vertices and $E \subseteq  L \times R$  the set of edges. Any element $(i, j) \in E$ can be interpreted as a edge joining  vertices $i$ and $j$.
We write
$$N(I)  \coloneqq \set{ j \in R \mid  \exists i \in I \textnormal{ with }  \kl{i, j} \in E }$$
for the set of (right) neighbors of  $I \subseteq L$.

\begin{definition}[Left\label{def:left} $d$-regular graph]\mbox{}\\
The bipartite graph $(L, R, E)$ is called $d$-left regular, if
$\sharp \ekl{ N(\set{i})} =d$ for every $i \in L$.
\end{definition}

According to Definition \ref{def:left},    $(L, R, E)$ is  left $d$-regular if
any left vertex is connected to exactly $d$ right vertices.
 Recall that the adjacency matrix $\A \in \set{0,1}^{m \times n}$ of  $(L, R, E)$ is defined by $\A\ekl{j,i} =1$ if $(i, j) \in E$ and $\A\ekl{j,i} = 0$ if $(i, j) \not\in E$. Consequently the adjacency  matrix of a  $d$-regular graph contains exactly $d$ ones in each column. If $d$ is small, then  the adjacency  matrix of a left $d$-regular bipartite graph is sparse.

\begin{definition}[Lossless expander]\mbox{}\\
Let $s \in \N$ and $\theta \in (0,1)$.
A $d$-left regular graph $(L, R, E)$  is called an
$(s,d,\theta)$-lossless expander, if
\begin{equation}\label{eq:expander}
 \sharp \ekl{ N(I) }\geq  (1- \theta) \, d \, \sharp \ekl{I}
 \textnormal{ for } I \subseteq L
 \textnormal{ with  } \sharp \ekl{I} \leq s  \,. \end{equation}
We write $\theta_s$ for the smallest constant  satisfying (\ref{eq:expander}).
\end{definition}

It is clear that the adjacency  matrix of a  $d$-regular graph  satisfies $\sharp \ekl{ N(I) }\leq  d \, \sharp \ekl{I}$. Hence an  expander graph satisfies the two sided estimate $ (1- \theta) \, d \, \sharp \ekl{I} \leq \sharp \ekl{ N(I) }\leq d \, \sharp \ekl{I}$.   Opposed to   Bernoulli and subsampled Hadamard matrices,  a lossless expander does  not satisfy the $\ell^2$-based RIP. However, in such a situation,  one can use the following  alternative recovery  result.

\begin{theorem}[Sparse recovery for lossless expander] \label{thm:expander} \mbox{}\\
Let $\x \in \R^n$ and let $\y \in \R^m$ satisfy  $\norm{ \y - \A\x }_1 \leq \eps$ for some noise level $\eps >0$. Suppose that  $\A$ is the adjacency matrix  of a $(2s,d,\theta_{2s})$-lossless expander having $\theta_{2s} < 1/6$   and let $\x_\star$ solve
\begin{eqnarray}\nonumber
&&\textnormal{minimize}_{\z} \norm{\z}_1 \\  \label{eq:ell11}
&& \textnormal{such that } \norm{\A \z - \y}_1
\leq \epsilon \,.
\end{eqnarray}
Then, for constants $c_1, c_2$ only depending on $\theta_{2s}$, we have
$\| \x - \x_\star \|_1
\leq   c_1  \sigma_{s}(\x)
+   c_2 \eps / d$.
\end{theorem}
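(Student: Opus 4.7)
My plan is to follow the standard two-part strategy used for expander-based compressed sensing: first establish an $\ell^1$-based robust null space property (RNSP) from the expansion hypothesis, and then combine it with the feasibility and optimality of $\x_\star$ to bound $\norm{\x-\x_\star}_1$.

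The main technical step is to prove the following $\ell^1$-RNSP lemma: if $\A$ is the adjacency matrix of a $(2s,d,\theta_{2s})$-lossless expander, then for every $v \in \R^n$ and every index set $S$ with $\sharp(S) \leq s$,
\begin{equation*}
    d \, \norm{v_S}_1
    \leq
    \rho \, d \, \norm{v_{S^c}}_1
    +
    \tau \, \norm{\A v}_1,
\end{equation*}
with $\rho = 2\theta_{2s}/(1-2\theta_{2s}) < 1/2$ (using $\theta_{2s}<1/6$) and some $\tau$ depending only on $\theta_{2s}$. The key combinatorial argument orders the coordinates of $v_S$ by decreasing magnitude and, using a greedy partition of $S$ together with the expansion inequality~\eqref{eq:expander}, shows that enough right-neighbors in $N(S)$ are ``uniquely'' connected to a single coordinate of $v_S$ that the collisions with $v_{S^c}$ can only absorb a $2\theta_{2s}$-fraction of the mass. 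Summing $\abs{(\A v)[j]}$ over $j \in N(S)$, exchanging the sum to the left vertices via left-regularity (each $i$ contributes $d$ times), and bounding the collision terms yields the displayed inequality. This is the standard Berinde--Gilbert--Indyk--Karloff--Strauss argument and is the main obstacle of the proof; once stated, the remainder is routine.

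With the RNSP in hand I apply it to the error $h \coloneqq \x_\star - \x$. Since both $\x$ and $\x_\star$ are feasible, the triangle inequality gives $\norm{\A h}_1 \leq 2\eps$. Let $\x_s$ be a best $s$-term approximation of $\x$ in $\ell^1$ and $S \coloneqq \supp(\x_s)$, so that $\sharp(S)\leq s$ and $\norm{\x_{S^c}}_1 = \sigma_s(\x)$. Optimality $\norm{\x_\star}_1 \leq \norm{\x}_1$ together with the decomposition $\norm{\x+h}_1 = \norm{\x_S+h_S}_1 + \norm{\x_{S^c}+h_{S^c}}_1$ and the reverse triangle inequality yields the cone condition
\begin{equation*}
    \norm{h_{S^c}}_1 \leq \norm{h_S}_1 + 2\sigma_s(\x).
\end{equation*}

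Finally I combine the cone condition with the RNSP. Substituting $\norm{h_{S^c}}_1 \leq \norm{h_S}_1 + 2\sigma_s(\x)$ into the RNSP gives $(1-\rho)\,d\,\norm{h_S}_1 \leq 2\rho\, d\, \sigma_s(\x) + \tau\norm{\A h}_1$, hence
\begin{equation*}
    \norm{h_S}_1 \leq \frac{2\rho}{1-\rho}\,\sigma_s(\x) + \frac{\tau}{(1-\rho)d}\,\norm{\A h}_1.
\end{equation*}
Adding $\norm{h_{S^c}}_1 \leq \norm{h_S}_1+2\sigma_s(\x)$ yields $\norm{h}_1 \leq c_1\sigma_s(\x) + c_2 \eps/d$ with $c_1, c_2$ depending only on $\theta_{2s}$, which is the claimed bound. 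As already noted, the proof reduces to the technical RNSP lemma, so I would quote that lemma from \cite{FouRau13} and then execute the short concatenation above.
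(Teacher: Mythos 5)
The paper offers no proof of its own here---it simply points to \cite{BerGilIndKarTra08,FouRau13}---and your proposal correctly reconstructs exactly the argument contained in those references: the $\ell^1$-robust null space property obtained from the $(2s,d,\theta_{2s})$-expansion via the collision-counting argument, combined with the standard cone condition coming from feasibility and $\ell^1$-optimality of $\x_\star$. Your constants ($\rho = 2\theta_{2s}/(1-2\theta_{2s}) < 1/2$ under $\theta_{2s}<1/6$, and $\tau$ of order $1/(1-2\theta_{2s})$ so that the noise term scales as $\eps/d$) and the final concatenation are all correct, so this is essentially the same (cited) proof, just written out.
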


\begin{proof}
See~\cite{BerGilIndKarTra08,FouRau13}.
\end{proof}

Choosing a $d$-regular bipartite  graph uniformly at random yields a lossless  expander with high probability.
Therefore, Theorem \ref{thm:expander} yields stable and robust recovery for such type of random matrices.

\begin{example}[Expander matrix]\label{ex:expander}\mbox{}\\
Take $\A \in \{0,1\}^{m\times n}$ as the adjacency matrix  of a randomly chosen left $d$-regular bipartite graph. Then $\A$ has exactly $d$ ones in each column, whose locations are uniformly  distributed. Suppose further that for some constant $c_\theta$  only depending on $\theta$ the parameters $d$ and $m$ have been selected according to
\begin{eqnarray*}
m &\geq& c_\theta s \kl{ \log  (n / s) + 1} \\
d  &=&  \left\lceil \frac{2 \log (n / s ) + 2 }{ \theta} \right \rceil
\,.
\end{eqnarray*}
Then,  $\theta_s \leq \theta$ with probability tending to 1 as
$n \to \infty$. Consequently,  for the adjacency matrix of a randomly chosen left $d$-regular bipartite graphs, called
expander matrix, we have  stable and robust recovery by (\ref{eq:ell11}).
\end{example}

\section*{References}

\end{document}